\newtheorem{theorem}{Theorem}
\theoremstyle{plain}
\newtheorem{corollary}{Corollary}
\newtheorem{lemma}{Lemma}
\newtheorem{remark}{Remark}
\numberwithin{equation}{section}
\begin{document}

\title[Some Steffensen's type inequalities]{Some Steffensen's type inequalities}

\author[M.W. Alomari]{M.W. Alomari$^{1,*}$}
\address{$^{1}$Department of Mathematics, Faculty of Science and
Information Technology, Irbid National University, 2600 Irbid
21110, Jordan.} \email{mwomath@gmail.com}

\author[S. Hussain]{S. Hussain$^2$}
\address{$^2$Department of Mathematics, University of Engineering and  Technology, Lahore, Pakistan}
\email{sabirhus@gmail.com }

\author[Z. Liu]{Z. Liu$^{3}$}
\address{$^{3}$Institute of Applied Mathematics, School of Science University of
Science and Technology, Liaoning Anshan 114051, Liaoning, China}.
\email{lewzheng@163.net}

\thanks{$^*$Corresponding author}

\date{December 30, 2012.}
\subjclass[2000]{26D10, 26D15}

\keywords{Steffensen inequality, Hayashi's inequality, Functions of bounded
variation}

\begin{abstract}
In this paper, new inequalities connected with the celebrated
Steffensen's integral inequality are proved.
\end{abstract}

\maketitle

\section{Introduction}
In 1918 and in order to study certain inequalities between mean
values, Steffensen \cite{RefN} has proved the following inequality
(see also \cite{RefJ} \& \cite{RefK}):
\begin{theorem}
Let $f$ and $g$ be two integrable functions defined on $(a,b)$,
$f$ is decreasing and for each $t \in (a,b)$, $0 \le g(t) \le 1$.
Then, the following inequality
\begin{eqnarray}
\label{Stef}\int\limits_{b - \lambda }^b {f\left( t \right)dt}
\le \int\limits_a^b {f\left( t \right)g\left( t \right)dt} \le
\int\limits_a^{a + \lambda } {f\left( t \right)dt}
\end{eqnarray}
holds, where, $\lambda  = \int\limits_a^b {g\left( t \right)dt}$.
\end{theorem}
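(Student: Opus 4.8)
The plan is to prove the two inequalities in \eqref{Stef} separately, in each case forming the natural difference, splitting the domain at the point $a+\lambda$ (respectively $b-\lambda$), and then exploiting the monotonicity of $f$ together with the sign of $1-g$ and $g$ on the two resulting subintervals. Throughout I use only that $f$ is decreasing, that $0\le g\le 1$, and the defining relation $\lambda=\int_a^b g(t)\,dt$.

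For the right-hand inequality I would study the difference
\begin{eqnarray*}
D_R := \int\limits_a^{a+\lambda} f(t)\,dt - \int\limits_a^b f(t)g(t)\,dt
\end{eqnarray*}
and split the second integral at $a+\lambda$. Writing the first integrand as $f(t)\cdot 1$, this recasts $D_R$ as
\begin{eqnarray*}
D_R = \int\limits_a^{a+\lambda} f(t)\bigl[1-g(t)\bigr]\,dt - \int\limits_{a+\lambda}^b f(t)g(t)\,dt.
\end{eqnarray*}
On $[a,a+\lambda]$ one has $f(t)\ge f(a+\lambda)$ and $1-g(t)\ge 0$, so the first integral is at least $f(a+\lambda)\int_a^{a+\lambda}(1-g(t))\,dt$; on $[a+\lambda,b]$ one has $f(t)\le f(a+\lambda)$ and $g(t)\ge 0$, so the second integral is at most $f(a+\lambda)\int_{a+\lambda}^b g(t)\,dt$. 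Combining, $D_R \ge f(a+\lambda)\bigl[\int_a^{a+\lambda}(1-g(t))\,dt-\int_{a+\lambda}^b g(t)\,dt\bigr]$, and the bracket collapses to $\lambda-\int_a^b g(t)\,dt=0$, giving $D_R\ge 0$.

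The left-hand inequality is entirely symmetric, with splitting point $b-\lambda$: I would examine $D_L:=\int_a^b f(t)g(t)\,dt-\int_{b-\lambda}^b f(t)\,dt$, split the first integral at $b-\lambda$, and rewrite $D_L=\int_a^{b-\lambda} f(t)g(t)\,dt+\int_{b-\lambda}^b f(t)[g(t)-1]\,dt$. Here $f(t)\ge f(b-\lambda)$ with $g\ge 0$ on $[a,b-\lambda]$, while on $[b-\lambda,b]$ the factor $g(t)-1$ is nonpositive, so $f(t)\le f(b-\lambda)$ yields $f(t)[g(t)-1]\ge f(b-\lambda)[g(t)-1]$. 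Pulling out $f(b-\lambda)$ leaves the bracket $\int_a^b g(t)\,dt-\lambda=0$, whence $D_L\ge 0$. I expect the only delicate point to be the sign-chasing in this left-hand case, where the monotonicity inequality for $f$ is multiplied by the possibly negative factor $g-1$ and so reverses direction; once that is handled correctly, both estimates close exactly because the leftover bracket vanishes identically by the definition of $\lambda$.
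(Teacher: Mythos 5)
Your proof is correct: both estimates are carried out properly, including the sign reversal when the monotonicity of $f$ is multiplied by the nonpositive factor $g-1$ on $[b-\lambda,b]$, and in each case the leftover bracket vanishes by the definition of $\lambda$ (one should also note, as you implicitly do, that $0\le\lambda\le b-a$ follows from $0\le g\le 1$, so the splitting points $a+\lambda$ and $b-\lambda$ lie in $[a,b]$). However, your route is genuinely different from the one the paper is organized around. The paper quotes this theorem without proof and instead builds everything on the representation in Lemma \ref{lemma1}: integration by parts for the Riemann--Stieltjes integral expresses $\int_a^{a+\lambda}f-\int_a^b fg$ as $-\int_a^{a+\lambda}\bigl(\int_a^x(1-g)\bigr)\,df-\int_{a+\lambda}^b\bigl(\int_x^b g\bigr)\,df$, where both inner integrals are nonnegative; for decreasing $f$ one has $df\le 0$, so the whole expression is nonnegative, which is how the paper recovers Steffensen's inequality (together with the upper bound) in the corollary containing \eqref{eq2.6} and \eqref{eq2.7}. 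Your argument is the classical direct one: it is more elementary, requiring no Stieltjes integrals and no hypothesis that $\int_a^b g\,df$ exists, and it pinpoints exactly where monotonicity enters (comparison of $f$ with its value at the splitting point). What the paper's identity buys in exchange is quantitative control: the same decomposition, combined with bounds on the integrator $f$ (bounded variation, Lipschitz, monotone, absolutely continuous), yields all the error estimates that constitute the paper's main results, which your pointwise comparison does not produce.
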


Some minor generalization of Steffensen's inequality (\ref{Stef})
was considered by T. Hayashi \cite{RefD}, using the substituting $
{{g\left( t \right)} \mathord{\left/
 {\vphantom {{g\left( t \right)} A}} \right.
 \kern-\nulldelimiterspace} A}$ for $g\left( t \right)$, where $A$ is positive constant.
In 1953, another interesting result was proved by Ap\'{e}ry
\cite{RefA}, where he extended the Steffensen's inequality
(\ref{Stef}) on infinite interval. Namely, we have the following
result:
\begin{theorem}
Let $f$ be decreasing on $(0, \infty)$ and let $g$ be a measurable
function on $(0, \infty)$ such that $0 \le g(t) \le A$ ($A\neq
0$). Then
\begin{eqnarray}
\label{Apery} \int\limits_{0}^{\infty} {f\left( t \right)g\left( t
\right)dt} \le A \int\limits_0^{\lambda } {f\left( t \right)dt},
\end{eqnarray}
where, $\lambda  = \frac{1}{A}\int\limits_0^{\infty} {g\left( t
\right)dt}$.
\end{theorem}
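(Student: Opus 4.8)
The plan is to prove the inequality directly by analyzing the sign of the difference
\[
D := A\int_0^\lambda f(t)\,dt - \int_0^\infty f(t)g(t)\,dt,
\]
exploiting the two structural features of the hypotheses: that $f$ is decreasing and that both $A-g$ and $g$ are nonnegative. First I would split the integral of $fg$ at the point $\lambda$ and rearrange, writing
\[
D = \int_0^\lambda \bigl(A - g(t)\bigr)f(t)\,dt \;-\; \int_\lambda^\infty g(t)f(t)\,dt.
\]
The idea is that the first integrand carries the nonnegative weight $A-g\ge 0$ over a region where $f(t)\ge f(\lambda)$, while the second carries the weight $g\ge 0$ over a region where $f(t)\le f(\lambda)$; in both cases I can compare $f(t)$ with the constant $f(\lambda)$ in the favorable direction, and crucially this works irrespective of the sign of $f(\lambda)$.

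Concretely, on $[0,\lambda]$ monotonicity gives $f(t)\ge f(\lambda)$, whence $(A-g(t))f(t)\ge (A-g(t))f(\lambda)$; on $[\lambda,\infty)$ it gives $f(t)\le f(\lambda)$, whence $g(t)f(t)\le g(t)f(\lambda)$. Integrating these two pointwise estimates and inserting them into the displayed expression for $D$ yields
\[
D \;\ge\; f(\lambda)\left[A\lambda - \int_0^\lambda g(t)\,dt - \int_\lambda^\infty g(t)\,dt\right].
\]
The key step is then to observe that the bracket vanishes: by the definition $\lambda = \frac1A\int_0^\infty g$ we have $A\lambda = \int_0^\infty g = \int_0^\lambda g + \int_\lambda^\infty g$, so the bracket is exactly zero and hence $D\ge 0$, which is the assertion \eqref{Apery}.

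Alternatively, and this is the route suggested by the paper's framing, I would deduce the result from the finite case by a limiting argument. Applying Hayashi's form of Steffensen's inequality on $(0,b)$ (that is, Theorem 1 with $g/A$ in place of $g$) gives the right-hand bound $\int_0^b fg \le A\int_0^{\lambda_b} f$ with $\lambda_b = \frac1A\int_0^b g \le b$, and then letting $b\to\infty$ recovers the claim.

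The main obstacle is not the algebra but the passage to the infinite interval: I must ensure the improper integrals $\int_0^\infty fg$ and $\int_0^\infty g$ converge, so that $\lambda$ is well defined and finite, and justify the interchange of limit and integral. For the limiting route this amounts to checking that $\lambda_b\uparrow\lambda$ (which holds since $g\ge 0$), that $x\mapsto\int_0^x f$ is continuous so that $\int_0^{\lambda_b}f\to\int_0^\lambda f$, and that $\int_0^b fg\to\int_0^\infty fg$; for the direct route it requires only that the two tail integrals be finite, which I would handle by a standard monotone/dominated convergence argument using $0\le g\le A$ together with the integrability of $f$.
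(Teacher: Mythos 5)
Your direct argument is correct, and it is worth noting that the paper itself offers no proof of this statement: Ap\'ery's extension is quoted in the introduction as a known result with a citation to the 1953 paper, so there is nothing in-house to compare against. What you give is the classical Steffensen-style sign analysis, transplanted to the half-line: the decomposition
\[
D=\int_0^\lambda\bigl(A-g(t)\bigr)f(t)\,dt-\int_\lambda^\infty g(t)f(t)\,dt,
\]
the pointwise comparisons $(A-g)f\ge (A-g)f(\lambda)$ on $[0,\lambda]$ and $gf\le gf(\lambda)$ on $[\lambda,\infty)$, and the cancellation $A\lambda=\int_0^\infty g$ are all sound, and the observation that the argument is insensitive to the sign of $f(\lambda)$ is exactly the right point to emphasize. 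Your alternative route (apply the finite-interval Hayashi/Steffensen bound on $(0,b)$ with $g/A$ and let $b\to\infty$) also works and is closer in spirit to how the surrounding literature treats the result, though it needs the extra limiting checks you list ($\lambda_b\uparrow\lambda$, continuity of $x\mapsto\int_0^x f$, convergence of $\int_0^b fg$), whereas the direct route needs only that $\lambda<\infty$ and that the relevant integrals exist in the extended sense. The one hypothesis you should state explicitly rather than leave implicit is the finiteness of $\int_0^\infty g$ (equivalently $\lambda<\infty$) and the local integrability of $f$ near $0$ (a decreasing $f$ may blow up there, in which case the right-hand side is $+\infty$ and the claim is vacuous); with those caveats recorded, the proof is complete and arguably more self-contained than an appeal to the finite case.
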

For more results concerning new proofs, generalizations, weaker
hypothesis or different forms were emerging one after another see
\cite{RefA}--\cite{RefN}, and the references therein.
\newline

The aim of this paper is to establish some Steffensen's type
inequalities under various assumptions.

\section{The results }

We may start with the following lemma:
\begin{lemma}\cite{RefK}
\label{lemma1}Let $f,g: [a,b] \to \mathbb{R}$ be integrable such
that $0 \le g(t) \le 1$, for all $t \in [a,b]$ and $\int_a^b
{g\left( t \right)df\left( t \right)}$  exists. Then we have the
following representation
\begin{multline}
\label{eq2.1}\int_a^{a + \lambda } {f\left( t \right)dt}  -
\int_a^b {f\left( t \right)g\left( t \right)dt}
\\
= -  \int_a^{a + \lambda } {\left( {\int_a^x {\left( {1 - g\left(
t \right)} \right)dt} } \right)df\left( x \right)} - \int_{a +
\lambda }^b {\left( {\int_x^b {g\left( t \right)dt} }
\right)df\left( x \right)} ,
\end{multline}
and
\begin{multline}
\label{eq2.2} \int_a^b {f\left( t \right)g\left( t \right)dt -
\int_{b - \lambda }^b {f\left( t \right)dt} }
\\
=  - \int_a^{b - \lambda } {\left( {\int_a^x {g(t)dt} }
\right)df\left( x \right)}  - \int_{b - \lambda }^b {\left(
{\int_x^b {\left( {1 - g\left( x \right)} \right)dt} }
\right)df\left( x \right)},
\end{multline}
where $\lambda : = \int_a^b {g\left( t \right)dt}$.
\end{lemma}

\begin{proof}
Integrating by parts
\begin{align*}
&-  \int_a^{a + \lambda } {\left( {\int_a^x {\left( {1 - g\left( t
\right)} \right)dt} } \right)df\left( x \right)} - \int_{a +
\lambda }^b {\left( {\int_x^b {g\left( t \right)dt} }
\right)df\left( x \right)}
\\
&= - \left( {\int_a^{a + \lambda } {\left( {1 - g\left( t \right)}
\right)dt} } \right)f\left( {a + \lambda } \right) + \int_a^{a +
\lambda } {f\left( x \right)d\left( {\int_a^x {\left( {1 - g\left(
t \right)} \right)dt} } \right)}
\\
&\qquad + \left( {\int_{a + \lambda }^b {g\left( t \right)dt} }
\right)f\left( {a + \lambda } \right) + \int_{a + \lambda }^b
{f\left( x \right)d\left( {\int_x^b {g\left( t \right)dt} }
\right)}
\\
&=  - \left( {\int_a^{a + \lambda } {\left( {1 - g\left( t
\right)} \right)dt} } \right)f\left( {a + \lambda } \right) +
\int_a^{a + \lambda } {f\left( x \right)\left( {1 - g\left( x
\right)} \right)dx}
\\
&\qquad + \left( {\int_{a + \lambda }^b {g\left( t \right)dt} }
\right)f\left( {a + \lambda } \right) - \int_{a + \lambda }^b
{f\left( x \right)g\left( x \right)dx}
\\
&=  - \lambda f\left( {a + \lambda } \right) + f\left( {a +
\lambda } \right)\int_a^{a + \lambda } {g\left( t \right)dt}  +
\int_a^{a + \lambda } {f\left( x \right)dx}
\\
&\qquad - \int_a^{a + \lambda } {f\left( x \right)g\left( x
\right)dx} + f\left( {a + \lambda } \right)\int_{a + \lambda }^b
{g\left( t \right)dt}  - \int_{a + \lambda }^b {f\left( x
\right)g\left( x \right)dx}
\\
&=- \lambda f\left( {a + \lambda } \right) + f\left( {a + \lambda
} \right)\int_a^{a + \lambda } {g\left( t \right)dt}  + f\left( {a
+ \lambda } \right)\int_{a + \lambda }^b {g\left( t \right)dt}
\\
&\qquad + \int_a^{a + \lambda } {f\left( x \right)dx}  - \int_a^b
{f\left( x \right)g\left( x \right)dx}
\\
&=\int_a^{a + \lambda } {f\left( x \right)dx}  - \int_a^b {f\left(
x \right)g\left( x \right)dx},
\end{align*}
which gives the desired representation (\ref{eq2.1}). The identity
(\ref{eq2.2}) can be also proved in a similar way, we shall omit
the details.
\end{proof}

\subsection{Inequalities for bounded variation integrators}

Our first result may be stated as follows:
\begin{theorem}
\label{thm.BV}Let $f,g: [a,b] \to \mathbb{R}$ be such that $0 \le
g(t) \le 1$, for all $t \in [a,b]$ and $\int_a^b {g\left( t
\right)df\left( t \right)}$  exists. If $f$ is of bounded
variation on $[a,b]$, then we have
\begin{align}
\label{eq2.3}\left| {\int_a^{a + \lambda } {f\left( t \right)dt} -
\int_a^b {f\left( t \right)g\left( t \right)dt} } \right| &\le
\left[ {\int_{a + \lambda }^b {g\left( t \right)dt} } \right]
\cdot \bigvee_a^b \left( f \right)
\\
&\le \left( {b - a - \lambda } \right) \cdot \bigvee_a^b \left( f
\right),\nonumber
\end{align}
and
\begin{align}
\label{eq2.4}\left| {\int_a^b {f\left( t \right)g\left( t
\right)dt}  - \int_{b - \lambda }^b {f\left( t \right)dt} }
\right| &\le \left[ {\int_a^{b - \lambda } {g\left( t \right)dt} }
\right] \cdot \bigvee_a^b \left( f \right)
\\
&\le \left( {b - a - \lambda } \right) \cdot \bigvee_a^b \left( f
\right),\nonumber
\end{align}
where $\lambda : = \int_a^b {g\left( t \right)dt}$.
\end{theorem}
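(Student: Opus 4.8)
The plan is to leverage the representation identities established in Lemma~\ref{lemma1} and then bound the resulting Riemann--Stieltjes integrals using the standard inequality for integrators of bounded variation. The starting point for \eqref{eq2.3} is the identity \eqref{eq2.1}, which expresses the difference $\int_a^{a+\lambda}f\,dt - \int_a^b fg\,dt$ as a sum of two Riemann--Stieltjes integrals against $df$. The key tool is the well-known estimate
\begin{equation*}
\left| \int_a^b \varphi(x)\,df(x) \right| \le \left( \sup_{x \in [a,b]} |\varphi(x)| \right) \cdot \bigvee_a^b(f),
\end{equation*}
valid whenever $f$ is of bounded variation and $\varphi$ is continuous on $[a,b]$.

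First I would apply this estimate separately to each of the two integrals in \eqref{eq2.1}. For the first integral, the integrand is $\varphi_1(x) = \int_a^x (1 - g(t))\,dt$ on $[a, a+\lambda]$; since $0 \le g \le 1$, the inner function $1-g$ is nonnegative and bounded by $1$, so $\varphi_1$ is nonnegative and increasing, whence its supremum over $[a, a+\lambda]$ is attained at $x = a+\lambda$, giving $\sup \varphi_1 = \int_a^{a+\lambda}(1-g(t))\,dt = \lambda - \int_a^{a+\lambda} g\,dt$. For the second integral, the integrand is $\varphi_2(x) = \int_x^b g(t)\,dt$ on $[a+\lambda, b]$; this is nonnegative and decreasing in $x$, so its supremum is attained at $x = a+\lambda$, giving $\sup \varphi_2 = \int_{a+\lambda}^b g(t)\,dt$. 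The crucial observation is that these two suprema are in fact equal: since $\lambda = \int_a^b g\,dt = \int_a^{a+\lambda} g\,dt + \int_{a+\lambda}^b g\,dt$, we have $\lambda - \int_a^{a+\lambda} g\,dt = \int_{a+\lambda}^b g\,dt$.

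Because both integrands share the common bound $\int_{a+\lambda}^b g(t)\,dt$, after applying the triangle inequality and the Stieltjes estimate I would like to combine the two variation terms into a single factor $\bigvee_a^b(f)$. The main subtlety here is that the Stieltjes estimate naturally produces $\bigvee_a^{a+\lambda}(f)$ and $\bigvee_{a+\lambda}^b(f)$ on the two respective subintervals, and the additivity of the total variation, $\bigvee_a^{a+\lambda}(f) + \bigvee_{a+\lambda}^b(f) = \bigvee_a^b(f)$, is exactly what is needed to merge them. This yields the first inequality in \eqref{eq2.3} with constant $\int_{a+\lambda}^b g\,dt$. The second, coarser inequality then follows immediately from the trivial bound $\int_{a+\lambda}^b g(t)\,dt \le \int_{a+\lambda}^b 1\,dt = b - a - \lambda$, again using $0 \le g \le 1$.

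The estimate \eqref{eq2.4} is proved by the same scheme applied to identity \eqref{eq2.2}: bound each of its two Stieltjes integrals, identify the common supremum as $\int_a^{b-\lambda} g(t)\,dt$ (using $\int_a^{b-\lambda} g\,dt + \int_{b-\lambda}^b g\,dt = \lambda$ and $0 \le 1-g \le 1$), and reassemble via additivity of the variation. I do not anticipate a genuine obstacle; the only step requiring care is the monotonicity analysis of the integrands $\varphi_1, \varphi_2$ so that their suprema are correctly located at the shared endpoint $a+\lambda$ (respectively $b-\lambda$), which is what forces the two bounds to coincide and permits the clean recombination of the variation.
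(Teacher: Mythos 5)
Your proposal follows essentially the same route as the paper's own proof: start from identity \eqref{eq2.1}, apply the triangle inequality and the bound $\left|\int p\,d\nu\right|\le \sup|p|\cdot\bigvee(\nu)$ on each subinterval, locate the suprema of the monotone integrands at $x=a+\lambda$, observe that both equal $\int_{a+\lambda}^b g(t)\,dt$, and recombine the two variations by additivity. The argument is correct and complete as outlined.
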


\begin{proof}
We prove the inequality (\ref{eq2.3}). Taking the modulus in
(\ref{eq2.1}) and utilizing the triangle inequality, we get
\begin{multline*}
\left| {\int_a^{a + \lambda } {f\left( t \right)dt}  - \int_a^b
{f\left( t \right)g\left( t \right)dt} } \right|
\\
\le \left| {\int_a^{a + \lambda } {\left( {\int_a^x {\left( {1 -
g\left( t \right)} \right)dt} } \right)df\left( x \right)} }
\right| +\left| {\int_{a + \lambda }^b {\left( {\int_x^b {g\left(
t \right)dt} } \right)df\left( x \right)} } \right|.
\end{multline*}
Using the fact that for a continuous function $p:[a,b] \to
\mathbb{R}$ and a function $\nu:[a,b] \to \mathbb{R}$ of bounded
variation, one has the inequality:
\begin{align}
\label{eq2.5}\left| {\int_a^b {p\left( t \right)d\nu\left( t
\right)} } \right| \le \mathop {\sup }\limits_{t \in \left[ {a,b}
\right]} \left| {p\left( t \right)} \right| \bigvee_a^b\left( \nu
\right),
\end{align}
observe that $\int_a^x {\left( {1 - g\left( t \right)} \right)dt}$
is a positive increasing function for
$x\in{\left[a,a+\lambda\right]}$ and $\int_x^b {g\left( t
\right)dt}$ is a positive decreasing function for
$x\in{\left[a+\lambda,b\right]}$, it follows that
\begin{align*}
&\left| {\int_a^{a + \lambda } {f\left( t \right)dt}  - \int_a^b
{f\left( t \right)g\left( t \right)dt} } \right|
\\
&\le \mathop {\sup }\limits_{x \in \left[ {a,a + \lambda }
\right]} \left[ {\int_a^x {\left( {1 - g\left( t \right)}
\right)dt} } \right] \cdot \bigvee_a^{a + \lambda } \left( f
\right) + \mathop {\sup }\limits_{x \in \left[ {a + \lambda ,b}
\right]} \left[ {\int_x^b {g\left( t \right)dt} } \right] \cdot
\bigvee_{a + \lambda }^b \left( f \right)
\\
&=\int_a^{a + \lambda } {\left( {1 - g\left( t \right)} \right)dt}
\cdot \bigvee_a^{a + \lambda } \left( f \right) + \int_{a +
\lambda }^b {g\left( t \right)dt} \cdot \bigvee_{a + \lambda }^b
\left( f \right)
\\
&= \left[ {\int_{a + \lambda }^b {g\left( t \right)dt} } \right]
\cdot \bigvee_a^b \left( f \right),
\end{align*}
since $\int_a^{a + \lambda } {\left( {1 - g\left( t \right)}
\right)dt}=\lambda-\int_a^{a + \lambda } {g\left( t \right)dt}=
{\int_{a + \lambda }^b {g\left( t \right)dt} }$ which proves the
first inequality in (\ref{eq2.3}), the second the inequality
follows immediately by assumptions. In similar way and using
(\ref{eq2.2}) we may deduce the desired inequality (\ref{eq2.4}),
and we shall omit the details.
\end{proof}

\begin{corollary}
Let $f,g: [a,b] \to \mathbb{R}$ be such that $0 \le g(t) \le 1$,
for all $t \in [a,b]$ and $\int_a^b {g\left( t \right)df\left( t
\right)}$  exists. If $f$ is decreasing on $[a,b]$, then we have
\begin{align}
\label{eq2.6} 0 \le \int_a^{a + \lambda } {f\left( t \right)dt} -
\int_a^b {f\left( t \right)g\left( t \right)dt}  \le \left[
{f\left( a \right) - f\left( b \right)} \right]\int_{a + \lambda
}^b {g\left( t \right)dt}
\end{align}
and
\begin{align}
\label{eq2.7} 0 \le \int_a^b {f\left( t \right)g\left( t
\right)dt}  - \int_{b - \lambda }^b {f\left( t \right)dt}  \le
\left[ {f\left( a \right) - f\left( b \right)} \right]\int_a^{b -
\lambda } {g\left( t \right)dt}.
\end{align}
\end{corollary}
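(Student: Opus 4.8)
The plan is to deduce the corollary directly from Theorem \ref{thm.BV}, exploiting the fact that a monotone function is automatically of bounded variation with an explicit total variation. First I would recall that if $f$ is decreasing on $[a,b]$, then $f$ is of bounded variation and its total variation is simply $\bigvee_a^b(f) = f(a) - f(b)$. Substituting this value into the first inequality (\ref{eq2.3}) of Theorem \ref{thm.BV} immediately yields
\[
\left| \int_a^{a+\lambda} f(t)\,dt - \int_a^b f(t)g(t)\,dt \right| \le \left[ f(a) - f(b) \right] \int_{a+\lambda}^b g(t)\,dt,
\]
and likewise substituting into (\ref{eq2.4}) gives the analogous bound with $\int_a^{b-\lambda} g(t)\,dt$. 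These are exactly the upper bounds claimed in (\ref{eq2.6}) and (\ref{eq2.7}).

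The remaining task is to strip the absolute value signs by showing that the two differences are nonnegative, which is precisely the lower bounds in (\ref{eq2.6}) and (\ref{eq2.7}). There are two natural routes, and I would present one. The quickest is to invoke the classical Steffensen inequality (\ref{Stef}): since $f$ is decreasing and $0 \le g \le 1$, it gives $\int_a^b f(t)g(t)\,dt \le \int_a^{a+\lambda} f(t)\,dt$ and $\int_{b-\lambda}^b f(t)\,dt \le \int_a^b f(t)g(t)\,dt$, which are exactly the sign conditions required. A more self-contained alternative is to read the sign directly off the representation in Lemma \ref{lemma1}: in (\ref{eq2.1}) the inner integrals $\int_a^x(1-g(t))\,dt$ and $\int_x^b g(t)\,dt$ are both nonnegative because $0 \le g \le 1$, while $f$ decreasing makes $df$ a nonpositive Stieltjes measure, so each of the two terms $-\int(\cdots)\,df(x)$ is nonnegative and hence so is their sum; the same reasoning applied to (\ref{eq2.2}) handles the second difference.

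Combining the upper bounds inherited from Theorem \ref{thm.BV} with the nonnegativity just established lets me drop the absolute values and conclude (\ref{eq2.6}) and (\ref{eq2.7}). There is no genuine obstacle here; the only point requiring care is that Theorem \ref{thm.BV} by itself controls only the modulus of each difference, so the sign information, whether imported from (\ref{Stef}) or extracted from the integral representation of Lemma \ref{lemma1}, is exactly what upgrades the absolute-value estimate into the two-sided inequality stated in the corollary.
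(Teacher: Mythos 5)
Your proof is correct and follows exactly the route the paper intends: the corollary is stated immediately after Theorem \ref{thm.BV} with no separate proof, the point being precisely that a decreasing $f$ has $\bigvee_a^b(f)=f(a)-f(b)$, which turns (\ref{eq2.3}) and (\ref{eq2.4}) into the stated upper bounds, while the nonnegativity is the classical Steffensen inequality (\ref{Stef}) (equivalently, read off from the sign of each term in Lemma \ref{lemma1}). Nothing is missing.
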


\subsection{Inequalities for Lipschitzian integrators}

Inequalities for $L$--Lipschitzian integrators may be considered
as follows:
\begin{theorem}
\label{thm.L}Let $f,g : [a,b] \to \mathbb{R}$ be integrable such
that $0 \le g(t) \le 1$ for all $t \in [a,b]$, and $\int_a^b
{g\left( t \right)df\left( t \right)}$ exists. If $f$ is
$L$--Lipschitzian on $[a,b]$, then
\begin{align}
\label{eq2.8}\left| {\int_a^{a + \lambda } {f\left( t \right)dt} -
\int_a^b {f\left( t \right)g\left( t \right)dt} } \right| \le
\frac{L}{2}\left[ {\left( {b - a - \lambda } \right)^2  + \lambda
^2 } \right],
\end{align}
and
\begin{align}
\label{eq2.9}\left| {\int_a^b {f\left( t \right)g\left( t
\right)dt} -  \int_{b - \lambda }^b {f\left( t \right)dt}} \right|
\le \frac{L}{2}\left[ {\left( {b - a - \lambda } \right)^2  +
\lambda ^2 } \right],
\end{align}

where $\lambda : = \int_a^b {g\left( t \right)dt}$.
\end{theorem}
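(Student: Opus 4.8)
The plan is to follow the same strategy used in the proof of Theorem \ref{thm.BV}, but to replace the bounded-variation estimate \eqref{eq2.5} with the Lipschitz analogue. I would start again from the representation \eqref{eq2.1} in Lemma \ref{lemma1}, take the modulus, and apply the triangle inequality to split the right-hand side into two integrals over $[a,a+\lambda]$ and $[a+\lambda,b]$. The key inequality to invoke now is the fact that for a Riemann-integrable function $p:[a,b]\to\mathbb{R}$ and an $L$--Lipschitzian function $\nu:[a,b]\to\mathbb{R}$, one has
\begin{align}
\left| \int_a^b p(t)\,d\nu(t) \right| \le L \int_a^b |p(t)|\,dt .\nonumber
\end{align}
This replaces the supremum-times-total-variation bound by an $L$ times an $L^1$-type bound, which is exactly what produces the squared lengths in \eqref{eq2.8}.

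Applying this to each piece, and using that $\int_a^x(1-g(t))\,dt\ge 0$ on $[a,a+\lambda]$ while $\int_x^b g(t)\,dt\ge 0$ on $[a+\lambda,b]$ (so the absolute values drop), I would obtain
\begin{align}
\left| \int_a^{a+\lambda} f\,dt - \int_a^b fg\,dt \right|
\le L\int_a^{a+\lambda}\!\!\left(\int_a^x (1-g(t))\,dt\right)dx
+ L\int_{a+\lambda}^b\!\!\left(\int_x^b g(t)\,dt\right)dx .\nonumber
\end{align}
The main work is then to bound these two iterated integrals. For the first, since $0\le 1-g(t)\le 1$ one has $\int_a^x(1-g(t))\,dt\le x-a$, so the first term is at most $L\int_a^{a+\lambda}(x-a)\,dx=\tfrac{L}{2}\lambda^2$. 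For the second, since $0\le g(t)\le 1$ one has $\int_x^b g(t)\,dt\le b-x$, so the second term is at most $L\int_{a+\lambda}^b(b-x)\,dx=\tfrac{L}{2}(b-a-\lambda)^2$. Adding the two bounds gives precisely $\tfrac{L}{2}\left[(b-a-\lambda)^2+\lambda^2\right]$, establishing \eqref{eq2.8}.

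I expect the genuinely delicate point to be the justification of the Lipschitz integration bound and the replacement of $g$ (and $1-g$) by the crude $0\le g\le 1$ estimates: one must check that these coarse bounds are exactly what make the inner integrals collapse to the lengths $x-a$ and $b-x$, and that the sign conditions let the absolute values be removed cleanly. The remaining inequality \eqref{eq2.9} follows in an entirely analogous fashion from the companion representation \eqref{eq2.2}, splitting at $b-\lambda$ and using $\int_a^x g(t)\,dt\le x-a$ and $\int_x^b(1-g(t))\,dt\le b-x$; I would state this and omit the parallel computation, as the authors do for \eqref{eq2.4}.
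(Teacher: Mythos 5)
Your proposal is correct and follows essentially the same route as the paper: starting from the representation \eqref{eq2.1}, applying the triangle inequality, invoking the bound $\left|\int p\,d\nu\right|\le L\int|p|\,dt$ for an $L$--Lipschitzian integrator, and then using $0\le g\le 1$ to reduce the inner integrals to $x-a$ and $b-x$, yielding $\tfrac{L}{2}\left[\lambda^2+(b-a-\lambda)^2\right]$. The only cosmetic difference is that you drop the absolute values on the inner integrals by noting their nonnegativity, whereas the paper carries them through before applying the same crude estimates.
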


\begin{proof}
Taking the modulus in $(\ref{eq2.1})$ and utilizing the triangle
inequality, we get
\begin{multline*}
\left| {\int_a^{a + \lambda } {f\left( t \right)dt}  - \int_a^b
{f\left( t \right)g\left( t \right)dt} } \right|
\\
\le \left| {\int_a^{a + \lambda } {\left( {\int_a^x {\left( {1 -
g\left( t \right)} \right)dt} } \right)df\left( x \right)} }
\right| + \left| {\int_{a + \lambda }^b {\left( {\int_x^b {g\left(
t \right)dt} } \right)df\left( x \right)} } \right| .
\end{multline*}
Using the fact that for a Riemann integrable function
$p:[c,d]\rightarrow \mathbb{R}$ and $L$-Lipschitzian function $\nu
:[c,d]\rightarrow \mathbb{R}$, one has the inequality
\begin{equation}
\left\vert {\int_{c}^{d}{p\left( t\right) d\nu \left( t\right)
}}\right\vert \leq L\int_{c}^{d}{\left\vert {p\left( t\right)
}\right\vert dt}. \label{eq2.5}
\end{equation}
it follows that
\begin{align*}
&\left| {\int_a^{a + \lambda } {f\left( t \right)dt}  - \int_a^b
{f\left( t \right)g\left( t \right)dt} } \right|
\\
&\le L \left[{\int_a^{a + \lambda } { \left| {\int_a^x {\left( {1
- g\left( t \right)} \right)dt} } \right|dx}+\int_{a + \lambda }^b
{\left| {\int_x^b {g\left( t \right)dt} } \right|dx}} \right]
\\
&= L\left[{\int_a^{a + \lambda } { \left| {\int_a^x {\left( {1 -
g\left( t \right)} \right)dt} } \right|dx}+\int_{a + \lambda }^b
{\left| {\int_x^b {g\left( t \right)dt} } \right|dx} } \right]
\\
&\le L\left[{\int_a^{a + \lambda } {\left( { \int_a^x {\left| {1 -
g\left( t \right)} \right|dt}} \right) dx}+\int_{a + \lambda }^b {
\left( {\int_x^b {\left| {g\left( t \right)} \right|dt}} \right)
dx}} \right].
\end{align*}
But since $0 \le g(x) \le 1$ for all $x\in [a,b]$, (similarly we
have, $ 0 \le 1 - g(x) \le 1$), then
\begin{align*}
&\left| {\int_a^{a + \lambda } {f\left( t \right)dt}  - \int_a^b
{f\left( t \right)g\left( t \right)dt} } \right|
\\
&\le L\left[ {\int_a^{a + \lambda } {\left( { \int_a^x {\left| {1
- g\left( t \right)} \right|dt}} \right) dx}+\int_{a + \lambda }^b
{ \left( {\int_x^b {\left| {g\left( t \right)} \right|dt}} \right)
dx} } \right]
\\
&\le L\left[ {\int_a^{a + \lambda } {\left( { x-a} \right)
dx}+\int_{a + \lambda }^b { \left( {b-x} \right) dx} } \right]
\\
&= \frac{L}{2}\left[ {\lambda ^2 +\left( {b - a - \lambda }
\right)^2} \right],
\end{align*}
which proves (\ref{eq2.8}). In a similar way and using
(\ref{eq2.2}) we may deduce the desired inequality (\ref{eq2.9}),
and we shall omit the details.
\end{proof}

\begin{remark}
Let $f,g$ be as in Theorem \ref{thm.L} . If $\int_a^b {g\left( t
\right)dt} =0 $, then
\begin{align}
\left| { \int_a^b {f\left( t \right)g\left( t \right)dt} } \right|
\le \frac{1}{2}L\left( {b - a} \right)^2.
\end{align}
\end{remark}

\subsection{Inequalities for monotonic non-decreasing integrators}
\begin{theorem}
\label{thm.Mono}Let $f,g : [a,b] \to \mathbb{R}$ be integrable
such that $0 \le g(t) \le 1$ for all $t \in [a,b]$, and $\int_a^b
{g\left( t \right)df\left( t \right)}$ exists. If $f$ is monotonic
nondecreasing on $[a,b]$, then
\begin{multline}
\label{eq2.12}\left| {\int_a^{a + \lambda } {f\left( t \right)dt}
- \int_a^b {f\left( t \right)g\left( t \right)dt} } \right|
\\
\le \lambda \left[ {f\left( {a + \lambda } \right) - f\left( a
\right)} \right] + \left( {b - a - \lambda } \right)\left[
{f\left( b \right) - f\left( {a + \lambda } \right)} \right],
\end{multline}
and
\begin{multline}
\label{eq2.13}\left| {\int_a^b {f\left( t \right)g\left( t
\right)dt} -  \int_{b - \lambda }^b {f\left( t \right)dt}} \right|
\\
\le \lambda\left[ {f\left( b \right) - f\left( {b - \lambda }
\right)} \right] + \left( {b - a - \lambda } \right) \left[
{f\left( {b - \lambda } \right) - f\left( a \right)} \right],
\end{multline}
where $\lambda : = \int_a^b {g\left( t \right)dt}$.
\end{theorem}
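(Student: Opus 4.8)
The plan is to follow the same strategy used in Theorems \ref{thm.BV} and \ref{thm.L}: start from the representation identity (\ref{eq2.1}), take the modulus, apply the triangle inequality, and then invoke the appropriate integration bound for the integrator $f$. Since $f$ is now monotonic nondecreasing, the relevant tool is the inequality that for a continuous function $p:[c,d]\to\mathbb{R}$ and a monotonic nondecreasing function $\nu:[c,d]\to\mathbb{R}$ one has
\begin{align}
\left|\int_c^d p(t)\,d\nu(t)\right| \le \int_c^d |p(t)|\,d\nu(t).\nonumber
\end{align}
First I would split the right-hand side of the modulus of (\ref{eq2.1}) into the two pieces over $[a,a+\lambda]$ and $[a+\lambda,b]$, exactly as in the two preceding proofs.

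Next I would bound each piece. On $[a,a+\lambda]$ the kernel $\int_a^x(1-g(t))\,dt$ is nonnegative, so its absolute value equals itself, and on $[a+\lambda,b]$ the kernel $\int_x^b g(t)\,dt$ is likewise nonnegative; these monotonicity/positivity facts were already established in the proof of Theorem \ref{thm.BV}. Applying the monotone-integrator inequality to each piece gives a bound of the form
\begin{multline}
\left|\int_a^{a+\lambda} f(t)\,dt - \int_a^b f(t)g(t)\,dt\right|\\
\le \int_a^{a+\lambda}\left(\int_a^x(1-g(t))\,dt\right)df(x) + \int_{a+\lambda}^b\left(\int_x^b g(t)\,dt\right)df(x).\nonumber
\end{multline}
To turn these into the stated closed form I would bound each kernel by its crude uniform bound: on $[a,a+\lambda]$ one has $\int_a^x(1-g(t))\,dt \le x-a \le \lambda$, and on $[a+\lambda,b]$ one has $\int_x^b g(t)\,dt \le b-x \le b-a-\lambda$. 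Pulling these constants out leaves $\lambda\int_a^{a+\lambda}df(x)$ and $(b-a-\lambda)\int_{a+\lambda}^b df(x)$, which evaluate to the telescoping differences $\lambda[f(a+\lambda)-f(a)]$ and $(b-a-\lambda)[f(b)-f(a+\lambda)]$, yielding (\ref{eq2.12}). The analogous argument applied to (\ref{eq2.2}), with the roles of the endpoints reversed, produces (\ref{eq2.13}).

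The step that requires the most care is the choice of which quantity to pull outside each Riemann--Stieltjes integral. One must decide between bounding the kernel by the sharp value $\int_{a+\lambda}^b g(t)\,dt$ (as in Theorem \ref{thm.BV}) or by the crude length bounds $\lambda$ and $b-a-\lambda$; only the latter produces the clean telescoping endpoint differences that appear in (\ref{eq2.12}). I expect the main obstacle to be verifying that the positivity of $f$'s total variation increments is correctly exploited: because $f$ is nondecreasing, $df(x) \ge 0$ and $\int_c^d df(x) = f(d)-f(c) \ge 0$, so no absolute values survive on the right-hand side and the final expression is automatically a sum of nonnegative terms, consistent with being an upper bound for a modulus.
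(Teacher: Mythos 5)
Your proposal is correct and follows essentially the same route as the paper: identity (\ref{eq2.1}), the triangle inequality, the bound $\left|\int p\,d\nu\right|\le\int|p|\,d\nu$ for nondecreasing $\nu$, and the kernel estimates $\int_a^x(1-g(t))\,dt\le x-a$ and $\int_x^b g(t)\,dt\le b-x$. The only cosmetic difference is the final step: the paper keeps $(x-a)$ and $(b-x)$ inside the Stieltjes integrals, integrates by parts, and then uses $\int_a^{a+\lambda}f(x)\,dx\ge\lambda f(a)$ and $\int_{a+\lambda}^b f(x)\,dx\le(b-a-\lambda)f(b)$, whereas you bound the kernels by the constants $\lambda$ and $b-a-\lambda$ and pull them out using $df\ge 0$ --- an equivalent estimate yielding the same telescoping bound.
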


\begin{proof}
Taking the modulus in $(\ref{eq2.1})$ and utilizing the triangle
inequality, we get
\begin{multline*}
\left| {\int_a^{a + \lambda } {f\left( t \right)dt}  - \int_a^b
{f\left( t \right)g\left( t \right)dt} } \right|
\\
\le  \left| {\int_a^{a + \lambda } {\left( {\int_a^x {\left( {1 -
g\left( t \right)} \right)dt} } \right)df\left( x \right)} }
\right| + \left| {\int_{a + \lambda }^b {\left( {\int_x^b {g\left(
t \right)dt} } \right)df\left( x \right)} } \right|.
\end{multline*}
Using the fact that for a monotonic non-decreasing function
$\nu:[a,b] \to \mathbb{R}$ and continuous function $p:[a,b] \to
\mathbb{R}$, one has the inequality
\begin{align}
\left| {\int_a^b {p\left( t \right)d\nu\left( t \right)} } \right|
\le \int_a^b {\left| {p\left( t \right)} \right|d\nu \left( t
\right)}. \label{eq2.14}
\end{align}
it follows that
\begin{align*}
&\left| {\int_a^{a + \lambda } {f\left( t \right)dt}  - \int_a^b
{f\left( t \right)g\left( t \right)dt} } \right|
\\
&\le \int_a^{a + \lambda } { \left| {\int_a^x {\left( {1 - g\left(
t \right)} \right)dt} } \right|df\left( x \right)}+ \int_{a +
\lambda }^b {\left| {\int_x^b {g\left( t \right)dt} }
\right|df\left( x \right)}
\\
&= \int_a^{a + \lambda } { \left| {\int_a^x {\left( {1 - g\left( t
\right)} \right)dt} } \right|df\left( x \right)}+\int_{a + \lambda
}^b {\left| {\int_x^b {g\left( t \right)dt} } \right|df\left( x
\right)}
\\
&\le \int_a^{a + \lambda } {\left( { \int_a^x {\left| {1 - g\left(
t \right)} \right|dt}} \right) df\left( x \right)}+\int_{a +
\lambda }^b { \left( {\int_x^b {\left| {g\left( t \right)}
\right|dt}} \right) df\left( x \right)}.
\end{align*}
But since $0 \le g(x) \le 1$ for all $x\in [a,b]$, (similarly we
have, $ 0 \le 1 - g(x) \le 1$), then
\begin{align}
&\left| {\int_a^{a + \lambda } {f\left( t \right)dt}  - \int_a^b
{f\left( t \right)g\left( t \right)dt} } \right|
\nonumber\\
&\le \int_a^{a + \lambda } {\left( { \int_a^x {\left| {1 - g\left(
t \right)} \right|dt}} \right) df\left( x \right)}+\int_{a +
\lambda }^b { \left( {\int_x^b {\left| {g\left( t \right)}
\right|dt}} \right) df\left( x \right)}
\nonumber\\
&\le \int_a^{a + \lambda } {\left( { x-a} \right) df\left( x
\right)}+\int_{a + \lambda }^b { \left( {b-x} \right) df\left( x
\right)} . \label{eq2.15}
\end{align}
Using Riemann--Stieltjes integral we may observe
\begin{align*}
\int_a^{a + \lambda } {\left( { x-a} \right) df\left( x \right)} =
\lambda f\left( {a + \lambda} \right)  - \int_a^{a + \lambda }
{f\left( x \right)dx},
\end{align*}
and
\begin{align*}
\int_{a + \lambda }^b { \left( {b-x} \right) df\left( x \right)} =
- \left( {b-a - \lambda} \right) f\left( a + \lambda \right) +
\int_{a + \lambda }^b { f\left( x \right)dx}.
\end{align*}
Utilizing the monotonicity of $f$ on $[a,b]$, we get
\begin{align*}
\int_a^{a + \lambda } {f\left( x \right)dx} \ge  \lambda f\left( a
\right),\,\,\,\,\,\,\,\,\int_{a + \lambda }^b { f\left( x
\right)dx} \le \left( {b - a - \lambda} \right)f\left( b \right)
\end{align*}
and therefore, by (\ref{eq2.15}) we get
\begin{align*}
&\left| {\int_a^{a + \lambda } {f\left( t \right)dt}  - \int_a^b
{f\left( t \right)g\left( t \right)dt} } \right|
\\
&\le \int_a^{a + \lambda } {\left( { x-a} \right) df\left( x
\right)}+\int_{a + \lambda }^b { \left( {b-x} \right) df\left( x
\right)}
\\
&\le  \lambda \left[ {f\left( {a + \lambda } \right) - f\left( a
\right)} \right] + \left( {b - a - \lambda } \right)\left[
{f\left( b \right) - f\left( {a + \lambda } \right)} \right],
\end{align*}
which proves (\ref{eq2.12}). In a similar way and using
(\ref{eq2.2}) we may deduce the desired inequality (\ref{eq2.13}),
and we shall omit the details.
\end{proof}

\begin{remark}
Let $f,g$ be as in Theorem \ref{thm.Mono} . If $\int_a^b {g\left(
t \right)dt} =0 $, then
\begin{align}
\left| { \int_a^b {f\left( t \right)g\left( t \right)dt} } \right|
\le \left( {b - a} \right)\left[ {f\left( b \right) - f\left( {a }
\right)} \right].
\end{align}
\end{remark}

\subsection{Inequalities for absolutely continuous integrators}

Another result for absolutely continuous integrators is
incorporated in the following theorem:
\begin{theorem}
\label{thm5}Let $f,g: [a,b] \to \mathbb{R}$ be integrable such
that $0 \le g(t) \le 1$, for all $t \in [a,b]$ such that $\int_a^b
{g\left( t \right)df\left( t \right)}$ exists. If $f$ is
absolutely continuous on $[a,b]$ with $f' \in L_p[a,b]$, $1 \le p
\le \infty$, then we have
\begin{multline}
\label{eq2.16}\left| {\int_a^{a + \lambda } {f\left( t \right)dt -
\int_a^b {f\left( t \right)g\left( t \right)dt} } } \right|
\\
\le \left\{
\begin{array}{l}
 \frac{1}{2}\left[ {\lambda ^2  + \left( {b - a - \lambda }
\right)^2 } \right] \left\| {f'} \right\|_{\infty ,\left[ {a,b}
\right]},\,\,\,\,\,\,\,\,\,\,\,\,\,\,\,\,\,\,\,\,\,\,\,\,\,\,\,\,\,if\,\,\,\,\,\,\,\,f' \in L_\infty  [a,b]; \\
  \\
 \frac{\left\| {f'} \right\|_{p ,\left[ {a,b}
\right]}}{{\left( {q + 1} \right)^{1/q} }} \left[ {\lambda
^{\left( {q + 1} \right)/q}  + \left( {b - a - \lambda }
\right)^{\left( {q + 1} \right)/q} } \right],\,\,\,\,\,\,\,\,if\,\,\,\,\,\,\,\,f' \in L_p [a,b],p > 1; \\
  \\
 \left[ \int_{a + \lambda }^b {g\left( t \right)dt}
\right]\left\| {f'} \right\|_1 ,\,\,\,\,\,\,\,\,\,\,\,\,\,\,\,\,\,\,\,\,\,\,\,\,\,\,\,\,\,\,\,\,\,\,\,\,\,\,\,\,\,\,\,\,\,\,\,\,\,\,\,\,\,\,\,\,\,\,\,\,\,\,\,\,\,\,\,\,\,if\,\,\,\,\,\,\,f' \in L_1 [a,b] \\
 \end{array} \right.
\end{multline}
and
\begin{multline}
\label{eq2.17}\left| {\int_a^b {f\left( t \right)g\left( t
\right)dt - \int_{b - \lambda }^b {f\left( t \right)dt} } }
\right|
\\
\le \left\{
\begin{array}{l}
 \frac{1}{2}\left[ {\lambda ^2  + \left( {b - a - \lambda }
\right)^2 } \right] \left\| {f'} \right\|_{\infty ,\left[ {a,b}
\right]},\,\,\,\,\,\,\,\,\,\,\,\,\,\,\,\,\,\,\,\,\,\,\,\,\,\,\,\,\,if\,\,\,\,\,\,\,\,f' \in L_\infty  [a,b]; \\
  \\
 \frac{\left\| {f'} \right\|_{p ,\left[ {a,b}
\right]}}{{\left( {q + 1} \right)^{1/q} }} \left[ {\lambda
^{\left( {q + 1} \right)/q}  + \left( {b - a - \lambda }
\right)^{\left( {q + 1} \right)/q} } \right],\,\,\,\,\,\,\,\,if\,\,\,\,\,\,\,\,f' \in L_p [a,b],p > 1; \\
  \\
 \left[ \int_a^{b - \lambda } {g\left( t \right)dt}
\right]\left\| {f'} \right\|_1 ,\,\,\,\,\,\,\,\,\,\,\,\,\,\,\,\,\,\,\,\,\,\,\,\,\,\,\,\,\,\,\,\,\,\,\,\,\,\,\,\,\,\,\,\,\,\,\,\,\,\,\,\,\,\,\,\,\,\,\,\,\,\,\,\,\,\,\,\,\,if\,\,\,\,\,\,\,f' \in L_1 [a,b] \\
 \end{array} \right.
\end{multline}

\end{theorem}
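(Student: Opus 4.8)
The plan is to follow exactly the template already used in Theorems \ref{thm.BV}, \ref{thm.L} and \ref{thm.Mono}: start from the representation (\ref{eq2.1}), take the modulus, and apply the triangle inequality to split the right-hand side into the two pieces
\[
I_1 = \left|\int_a^{a+\lambda}\left(\int_a^x (1-g(t))\,dt\right)df(x)\right|,\qquad I_2 = \left|\int_{a+\lambda}^b\left(\int_x^b g(t)\,dt\right)df(x)\right|.
\]
Since $f$ is absolutely continuous, $df(x)=f'(x)\,dx$, so each Riemann--Stieltjes integral becomes an ordinary Lebesgue integral and I would bound $I_1 \le \int_a^{a+\lambda}|p_1(x)|\,|f'(x)|\,dx$ and $I_2 \le \int_{a+\lambda}^b|p_2(x)|\,|f'(x)|\,dx$, where $p_1(x)=\int_a^x(1-g(t))\,dt$ and $p_2(x)=\int_x^b g(t)\,dt$. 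The same elementary kernel estimates as in Theorem \ref{thm.L} apply: because $0\le 1-g\le 1$ and $0\le g\le 1$, one has $0\le p_1(x)\le x-a$ on $[a,a+\lambda]$ and $0\le p_2(x)\le b-x$ on $[a+\lambda,b]$.

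With these bounds in hand, the three branches follow by applying the matching form of H\"older's inequality to $I_1$ and $I_2$ separately and then adding. In the $L_\infty$ case I factor out $\left\|f'\right\|_{\infty}$ and integrate the majorants, obtaining $\int_a^{a+\lambda}(x-a)\,dx=\tfrac{\lambda^2}{2}$ and $\int_{a+\lambda}^b(b-x)\,dx=\tfrac{(b-a-\lambda)^2}{2}$, whose sum gives the first branch. In the case $1<p<\infty$ I apply H\"older with conjugate exponent $q$, so that $I_1\le\left\|f'\right\|_{p}\left(\int_a^{a+\lambda}(x-a)^q\,dx\right)^{1/q}=\left\|f'\right\|_{p}\,\frac{\lambda^{(q+1)/q}}{(q+1)^{1/q}}$ and likewise $I_2\le\left\|f'\right\|_{p}\,\frac{(b-a-\lambda)^{(q+1)/q}}{(q+1)^{1/q}}$, which add up to the second branch.

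The $L_1$ branch is where the only genuine observation lies. Here one uses $\left|\int_c^d p\,f'\,dt\right|\le\left(\sup_{[c,d]}|p|\right)\left\|f'\right\|_{L_1[c,d]}$ on each subinterval. Both kernels attain their supremum at the common endpoint $x=a+\lambda$: since $p_1$ is increasing with $p_1(a+\lambda)=\lambda-\int_a^{a+\lambda}g=\int_{a+\lambda}^b g$, and $p_2$ is decreasing with $p_2(a+\lambda)=\int_{a+\lambda}^b g$, the two suprema coincide and equal $\int_{a+\lambda}^b g(t)\,dt$. Because $\left\|f'\right\|_{L_1[a,a+\lambda]}+\left\|f'\right\|_{L_1[a+\lambda,b]}=\left\|f'\right\|_1$, the two contributions merge into the single clean bound $\left[\int_{a+\lambda}^b g(t)\,dt\right]\left\|f'\right\|_1$, with no spurious factor of two. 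I expect this coincidence of the suprema at $a+\lambda$, rather than any delicate estimate, to be the one non-mechanical point of the proof.

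Finally, the companion inequality (\ref{eq2.17}) is derived in exactly the same manner from representation (\ref{eq2.2}), whose kernels $\int_a^x g$ on $[a,b-\lambda]$ and $\int_x^b(1-g)$ on $[b-\lambda,b]$ satisfy the analogous majorants $x-a$ and $b-x$ and again share a common supremum point at $x=b-\lambda$ equal to $\int_a^{b-\lambda}g(t)\,dt$; the details are therefore omitted.
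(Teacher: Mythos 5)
Your proposal is correct and follows essentially the same route as the paper: the same splitting of (\ref{eq2.1}) via the triangle inequality, the same kernel majorants $x-a$ and $b-x$ for the $L_\infty$ and $L_p$ branches, and the same observation for the $L_1$ branch that both kernels peak at $x=a+\lambda$ with common value $\int_{a+\lambda}^b g(t)\,dt$ so that the two $L_1$ pieces of $f'$ recombine into $\left\| f' \right\|_1$. The treatment of (\ref{eq2.17}) via (\ref{eq2.2}) likewise matches the paper's (omitted) argument.
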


\begin{proof}
Assume that $f' \in L_{\infty}[a,b]$ and utilizing the triangle
inequality, we get
\begin{align*}
&\left| {\int_a^{a + \lambda } {f\left( t \right)dt}  - \int_a^b
{f\left( t \right)g\left( t \right)dt} } \right|
\\
&\le \left| {\int_a^{a + \lambda } {\left( {\int_a^x {\left( {1 -
g\left( t \right)} \right)dt} } \right)f'\left( x \right)dx} }
\right|+ \left| {\int_{a + \lambda }^b {\left( {\int_x^b {g\left(
t \right)dt} } \right)f'\left( x \right)dx} } \right|
\\
&\le \int_a^{a + \lambda } { \left| {\int_a^x {\left( {1 - g\left(
t \right)} \right)dt} } \right|\left| {f'\left( x \right)}
\right|dx} + \int_{a + \lambda }^b {\left| {\int_x^b {g\left( t
\right)dt} } \right| \left| {f'\left( x \right)} \right| dx}
\\
&\le  \left\| {f'} \right\|_{\infty ,\left[ {a,a + \lambda }
\right]}\int_a^{a + \lambda } { \left( {\int_a^x {\left| {1 -
g\left( t \right)} \right|dt}} \right)dx} + \left\| {f'}
\right\|_{\infty ,\left[ {a + \lambda ,b} \right]} \int_{a +
\lambda }^b {\left( {\int_x^b {\left| {g\left( t \right)}
\right|dt} } \right)dx}
\\
&\le  \left\| {f'} \right\|_{\infty ,\left[ {a,a + \lambda }
\right]}\int_a^{a + \lambda } { \left( {x - a} \right)dx} +
\left\| {f'} \right\|_{\infty ,\left[ {a + \lambda ,b} \right]}
\int_{a + \lambda }^b {\left( {b - x } \right)dx}
\\
&\le  \frac{1}{2}\left[ {\lambda ^2  + \left( {b - a - \lambda }
\right)^2 } \right] \left\| {f'} \right\|_{\infty ,\left[ {a,b}
\right]},
\end{align*}
and the first inequality in  (\ref{eq2.16}) is proved.

Assume $f' \in L_{p}[a,b]$, using H\"{o}lder integral inequality
for $p>1$, $\frac{1}{p} + \frac{1}{q} = 1$, we also have
\begin{align*}
&\left| {\int_a^{a + \lambda } {f\left( t \right)dt}  - \int_a^b
{f\left( t \right)g\left( t \right)dt} } \right|
\\
&\le \int_a^{a + \lambda } { \left| {\int_a^x {\left( {1 - g\left(
t \right)} \right)dt} } \right|\left| {f'\left( x \right)}
\right|dx} + \int_{a + \lambda }^b {\left| {\int_x^b {g\left( t
\right)dt} } \right| \left| {f'\left( x \right)} \right| dx}
\\
&\le  \left( {\int_a^{a + \lambda } { \left| {\int_a^x {\left( {1
- g\left( t \right)} \right)dt} } \right|^q dx}} \right)^{1/q}
\left( {\int_a^{a + \lambda } {\left| {f'\left( x \right)}
\right|^p dx}} \right)^{1/p}
\\
&\qquad+ \left( {\int_{a + \lambda }^b {\left| {\int_x^b {g\left(
t \right)dt} } \right|^q dx}} \right)^{1/q} \left( {\int_{a +
\lambda }^b { \left| {f'\left( x \right)} \right|^p dx}}
\right)^{1/p}
\\
&\le  \left\| {f'} \right\|_{p ,\left[ {a,a + \lambda } \right]}
\left( {\int_a^{a + \lambda } { \left( {x - a} \right)^qdx} }
\right)^{1/q} + \left\| {f'} \right\|_{p,\left[ {a + \lambda ,b}
\right]} \left( {\int_{a + \lambda }^b {\left( {b - x }
\right)^qdx}} \right)^{1/q}
\\
&\le \frac{\left\| {f'} \right\|_{p ,\left[ {a,b}
\right]}}{{\left( {q + 1} \right)^{1/q} }} \left[ {\lambda
^{\left( {q + 1} \right)/q}  + \left( {b - a - \lambda }
\right)^{\left( {q + 1} \right)/q} } \right],
\end{align*}
giving the second inequality in (\ref{eq2.16}).

Finally, we also observe that
\begin{align*}
&\left| {\int_a^{a + \lambda } {f\left( t \right)dt}  - \int_a^b
{f\left( t \right)g\left( t \right)dt} } \right|
\\
&\le \mathop {\sup }\limits_{x \in \left[ {a,a + \lambda }
\right]} \left[ {\int_a^x {\left( {1 - g\left( t \right)}
\right)dt} } \right] \cdot \int_a^{a + \lambda } \left|
f'\left(x\right) \right|dx + \mathop {\sup }\limits_{x \in \left[
{a + \lambda ,b} \right]} \left[ {\int_x^b {g\left( t \right)dt} }
\right] \cdot \int_{a + \lambda }^b
\left|f'\left(x\right)\right|dx
\\
&=\int_a^{a + \lambda } {\left( {1 - g\left( t \right)} \right)dt}
\cdot \int_a^{a + \lambda } \left| f'\left(x\right) \right|dx+
\int_{a + \lambda }^b {g\left( t \right)dt} \cdot \int_{a +
\lambda }^b \left|f'\left(x\right)\right|dx
\\
&= \left[ \int_{a + \lambda }^b {g\left( t \right)dt}
\right]\left\| {f'} \right\|_1.
\end{align*}
which proves the last inequality in (\ref{eq2.16}). The
inequalities in (\ref{eq2.17}) may be proved in the same way using
the identity (\ref{eq2.2}), we shall omit the details.
\end{proof}

\begin{remark}
One may deduce new inequalities of Hayashi's type by using the
substituting $ {{g\left( t \right)} \mathord{\left/
 {\vphantom {{g\left( t \right)} A}} \right.
 \kern-\nulldelimiterspace} A}$ for $g\left( t \right)$, where $A$ is nonzero positive
 constant, and then using the identities
 \begin{multline}
\label{eq2.18}\int_a^{a + \lambda } {f\left( t \right)dt}  -
\frac{1}{A} \int_a^b {f\left( t \right)g\left( t \right)dt}
\\
= - \frac{1}{A} \int_a^{a + \lambda } {\left( {\int_a^x {\left( {A
- g\left( t \right)} \right)dt} } \right)df\left( x \right)} -
\frac{1}{A} \int_{a + \lambda }^b {\left( {\int_x^b {g\left( t
\right)dt} } \right)df\left( x \right)} ,
\end{multline}
and
\begin{multline}
\label{eq2.19} \frac{1}{A}\int_a^b {f\left( t \right)g\left( t
\right)dt - \int_{b - \lambda }^b {f\left( t \right)dt} }
\\
=  - \frac{1}{A}\int_a^{b - \lambda } {\left( {\int_a^x {g(t)dt} }
\right)df\left( x \right)}  - \frac{1}{A} \int_{b - \lambda }^b
{\left( {\int_x^b {\left( {A - g\left( x \right)} \right)dt} }
\right)df\left( x \right)},
\end{multline}
where $\lambda : = \frac{1}{A}\int_a^b {g\left( t \right)dt}$.

\end{remark}

\end{document}